\newcommand{\norm}[1]{\ensuremath{\left\|#1\right\|}}
\newcommand{\D}{\mathbb{D}}
\newcommand{\be}{\begin{equation}}
\newcommand{\ee}{\end{equation}}
\numberwithin{equation}{section}
\newtheorem{thm}{Theorem}[section]
\newtheorem{lm}[thm]{Lemma}
\newtheorem{prop}[thm]{Proposition}
\newtheorem*{prop*}{Proposition}
\newtheorem{question}[thm]{Question}
\theoremstyle{remark}
\newtheorem*{rem*}{Remark}
\begin{document}

\title{Thin sequences and the Gram matrix}

\author[P. Gorkin]{Pamela Gorkin}
\address{Pamela Gorkin, Department of Mathematics\\ Bucknell University\\  Lewisburg, PA  USA 17837}
\email{pgorkin@bucknell.edu}

\author[J.E.McCarthy]{John E. McCarthy}
\address{John E. McCarthy\\
Dept. of Mathematics \\
Washington University in St. Louis\\
St. Louis, MO 63130}
\email{mccarthy@wustl.edu}

\author[S. Pott]{Sandra Pott}
\address{Sandra Pott\\ Faculty of Science\\
Centre for Mathematical Sciences\\
Lund University\\
22100 Lund, Sweden}
\email{sandra@maths.lth.se}

\author[B.D.Wick]{Brett D. Wick}
\address{Brett D. Wick, School of Mathematics\\ Georgia Institute of Technology\\ 686 Cherry Street\\ Atlanta, GA USA 30332-0160}
\email{wick@math.gatech.edu}

\thanks{P.G. Partially supported by Simons Foundation Grant 243653}
\thanks{J.M. Partially supported by National Science Foundation Grant  DMS 1300280}
\thanks{B.D.W Partially supported by National Science Foundation Grant DMS 0955432}

\maketitle

\begin{abstract}
We provide a new proof of Volberg's Theorem characterizing thin interpolating sequences as those for which the Gram matrix associated to the normalized reproducing kernels is a compact perturbation of the identity. In the same paper, Volberg characterized sequences for which the Gram matrix is a compact perturbation of a unitary as well as those for which the Gram matrix is a Schatten-$2$ class perturbation of a unitary operator. We extend this characterization from $2$ to $p$, where $2 \le p \le \infty$.
\end{abstract}

\section{Introduction}

Let $\mathbb{D}$ denote the open unit disk and $\mathbb{T}$ the unit circle. Given $\{\alpha_j\}$, a Blaschke sequence of points in $\D$, we let $B$ denote the corresponding Blaschke product and $B_n$ denote the Blaschke product with the zero $\alpha_n$ removed. Further, we let $\delta_j = |B_j(\alpha_j)|$,  $k_j = \frac{1}{1 - \overline{\alpha_j} z}$ denote the Szeg\H{o} kernel (the reproducing kernel for $H^2$) at $\alpha_j$, $g_j = k_j/\| k_j\|$ the $H^2$-normalized kernel, and $G$ the Gram matrix  with entries $G_{ij} = \langle g_j , g_i \rangle$. In the second part of \cite[Theorem 2]{V}, Volberg's goal was to develop a condition ensuring that $\{g_n\}$ is near an orthogonal basis; by this, one means that there exist $U$ unitary and $K$ compact such that 
$$g_n = (U + K)e_n,$$ where $\{e_n\}$ is the standard orthogonal basis for $\ell^2$. By \cite[Section 3]{F} or \cite[Proposition 3.2]{CFT}, this is equivalent to the Gram matrix defining a bounded operator of the form $I + K$ with $K$ compact. Following Volberg and anticipating the connection to the Schatten-$p$ classes, we call such bases $\mathcal{U} + \mathcal{S}_\infty$ bases. Volberg showed that $\{g_n\}$ is a $\mathcal{U} + \mathcal{S}_\infty$ basis if and only if $\lim_n \delta_n = 1$; in other words, if and only if $\{\alpha_n\}$ is a thin sequence. Assuming $\{g_n\}$ is a $\mathcal{U} + \mathcal{S}_\infty$ basis, it is not difficult to show that the sequence $\{\alpha_n\}$ must be thin. But Volberg's proof of the converse is more difficult and depends on the main lemma of a paper of Axler, Chang and Sarason \cite[Lemma 5]{ACS}, estimating the norm of a certain product of Hankel operators as well as a factorization theorem for Blaschke products. The lemma in \cite{ACS} uses maximal functions and a certain distribution function inequality. A more direct proof of Volberg's result is desirable, and we provide a simpler proof of this result in Theorem~\ref{prop:GMPW} of this paper.

 In a second theorem, letting $\mathcal{S}_2$ denote the class of Hilbert-Schmidt operators, Volberg showed (see \cite[Theorem 3]{V}) that $\{g_n\}$ is a $\mathcal{U} + \mathcal{S}_2$ basis if and only if $\prod_{n = 1}^\infty \delta_n$ converges. We are interested in estimates for the ``in-between'' cases. We provide a new proof of Volberg's theorem for $p = \infty$ and prove the following theorem.

\begin{thm}
For $2 \le p < \infty$, the operator $G - I \in \mathcal{S}_p$ if and only if $\sum_n (1 - \delta_n^2)^{p/2} < \infty$. \end{thm}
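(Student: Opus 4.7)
For the necessity, I would start from the Schatten--Lorentz interpolation inequality
$$\sum_j \|Ae_j\|^p \;\le\; \|A\|_{\mathcal{S}_p}^p \qquad (2 \le p \le \infty),$$
which is obtained by interpolating between the $p=2$ identity $\sum_j\|Ae_j\|^2 = \|A\|_{\mathcal{S}_2}^2$ and the trivial $p=\infty$ bound $\|Ae_j\| \le \|A\|_{\mathrm{op}}$ (via $[\mathcal{S}_2,\mathcal{S}_\infty]_\theta = \mathcal{S}_p$). Since $\mathcal{S}_p \subset \mathcal{S}_\infty$ for $p < \infty$, the hypothesis $G - I \in \mathcal{S}_p$ makes $G - I$ compact, so Theorem~\ref{prop:GMPW} gives $\delta_n \to 1$. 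The $j$-th column norm squared of $G - I$ is
$$\|(G-I)e_j\|^2 \= \sum_{i \ne j}\bigl(1 - |\varphi_{\alpha_i}(\alpha_j)|^2\bigr) \;\ge\; 1 - \prod_{i\ne j}|\varphi_{\alpha_i}(\alpha_j)|^2 \= 1 - \delta_j^2,$$
by the Weierstrass product inequality $\prod(1-x_i)\ge 1-\sum x_i$ for $x_i\in[0,1]$. Raising to the $p/2$ power and summing gives $\sum_j(1-\delta_j^2)^{p/2} \le \|G-I\|_{\mathcal{S}_p}^p$.

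For the sufficiency, assume $\sum_n(1-\delta_n^2)^{p/2}<\infty$; in particular $\delta_n\to 1$, making the sequence thin and $G-I$ compact. The plan is to factor $G = W^*W$, where $W:\ell^2\to\ell^2$ has matrix entries $W_{ij} = \langle g_j, \tilde g_i \rangle$ taken with respect to the Malmquist--Walsh orthonormal basis $\{\tilde g_i\}$ of the model space $K_B$. Because $\tilde g_i$ carries the factor $\prod_{k<i}\varphi_{\alpha_k}$, which vanishes at $\alpha_j$ whenever $j < i$, the matrix $W$ is upper triangular; choosing the unimodular Malmquist--Walsh phases so that $W_{jj}$ is real positive gives $W_{jj} = \delta_j^{<} := \prod_{k<j}|\varphi_{\alpha_k}(\alpha_j)|$. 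The column normalization $\sum_i|W_{ij}|^2 = \|g_j\|^2 = 1$ then yields
$$\|(W-I)e_j\|^2 \= 2(1-\delta_j^{<}) \;\le\; 2(1-\delta_j^2),$$
using $\delta_j \le \delta_j^{<}$. Writing $G - I \= (W-I) + (W-I)^* + (W-I)^*(W-I)$ reduces the problem to proving $\|W-I\|_{\mathcal{S}_p}^p \le C\sum_j(1-\delta_j^2)^{p/2}$, since the quadratic term is absorbed via $\|(W-I)^*(W-I)\|_{\mathcal{S}_p} \le \|W-I\|_{\mathcal{S}_{2p}}^2 \le \|W-I\|_{\mathcal{S}_p}^2$.

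The main obstacle is this last step. Column-norm control alone is insufficient because the inequality $\|A\|_{\mathcal{S}_p}^p \lesssim \sum_j\|Ae_j\|^p$ fails for $p>2$, even for upper triangular $A$ (rank-one examples $A = e_1\otimes a^*$ give $\|A\|_{\mathcal{S}_p} = \|a\|_2 > \|a\|_p$). The extra structural input must come from the explicit form $W_{ij} = \sqrt{a_{ij}}\prod_{k<i}|\varphi_{\alpha_k}(\alpha_j)|$ for $i<j$, where $a_{ij} = 1 - |\varphi_{\alpha_i}(\alpha_j)|^2$, which encodes real decay away from the diagonal via the partial Blaschke product. I would exploit this by decomposing $W-I$ into dyadic layers $L_m$ according to the size of $a_{ij}$ (equivalently the pseudohyperbolic separation of $\alpha_i,\alpha_j$), control each $\|L_m\|_{\mathcal{S}_2}^2$ by the row/column Hilbert--Schmidt bound (using $\sum_{i\ne j}a_{ij}\lesssim 1-\delta_j^2$) and each $\|L_m\|_{\mathrm{op}}$ by a weighted Schur test with weights built from the partial Blaschke products, then combine via the log-convexity inequality $\|L_m\|_{\mathcal{S}_p}^p \le \|L_m\|_{\mathrm{op}}^{p-2}\|L_m\|_{\mathcal{S}_2}^2$ and sum a geometric series in $m$. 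This is the technically heaviest part of the argument.
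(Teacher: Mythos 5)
Your necessity argument is correct and is essentially the paper's: the column-norm identity $\|(G-I)e_j\|^2=\sum_{i\ne j}\bigl(1-|\varphi_{\alpha_i}(\alpha_j)|^2\bigr)$ together with the lower bound by $1-\delta_j^2$ (the paper's Lemma~\ref{4.1} gets a two-sided comparison via $-\log x\asymp 1-x$), combined with the inequality $\sum_j\|Ae_j\|^p\le\|A\|_{\mathcal{S}_p}^p$ for $p\ge 2$ (Theorem~\ref{zhu1}; your interpolation derivation of it is fine). That half stands.

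The sufficiency direction, however, contains a genuine gap, and you have named it yourself: the entire argument reduces to the claim $\|W-I\|_{\mathcal{S}_p}^p\le C\sum_j(1-\delta_j^2)^{p/2}$ for the upper-triangular Malmquist--Walsh factor $W$, and this claim is never established. You correctly observe that the column-norm bound $\|(W-I)e_j\|^2\le 2(1-\delta_j^2)$ cannot by itself give a Schatten-$p$ bound when $p>2$. The proposed remedy --- dyadic layers $L_m$, a weighted Schur test for $\|L_m\|_{\mathrm{op}}$, and summation of a geometric series via $\|L_m\|_{\mathcal{S}_p}^p\le\|L_m\|_{\mathrm{op}}^{p-2}\|L_m\|_{\mathcal{S}_2}^2$ --- is a program rather than a proof: no decay of $\|L_m\|_{\mathrm{op}}$ in $m$ is demonstrated, the Schur weights are unspecified, and without a quantitative off-diagonal decay estimate for the entries $W_{ij}$ there is no way to verify that the series converges. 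As written, the ``if'' half of the theorem is unproven.

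The paper's route avoids all of this and is worth comparing. Arranging the $\delta_j$ in increasing order and letting $G_N$ be the Gram matrix of the tail $\{\alpha_j\}_{j\ge N}$, Earl's Theorem~\ref{thm:E} together with Proposition~\ref{propb1} and Lemma~\ref{lm:useful2} yields the \emph{operator-norm} bound $\|G_N-I_N\|\le C\sqrt{1-\delta_N}$ with $C$ independent of $N$. Taking $F$ to agree with $G-I$ on the first $N$ rows and columns (so $\operatorname{rank} F\le 2N$), the approximation-number characterization of singular values gives $\lambda_{2N+1}\le\|G_{N+1}-I_{N+1}\|\le C\sqrt{1-\delta_{N+1}}$, and $\sum_n\lambda_n^p\lesssim\sum_N(1-\delta_N)^{p/2}$ follows immediately. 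In other words, the missing quantitative input in your sketch is exactly an operator-norm estimate on tails, and once one has that (from Earl's interpolation theorem), the triangular factorization is no longer needed.
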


Volberg's theorem covered the cases $p = 2$ and $p = \infty$, but our proofs differ in the following way: Instead of using the results of \cite{ACS} and theorems about Hankel operators, we use the relationship between growth estimates of functions that do interpolation on thin sequences (see \cite{E}, \cite{E2}) and the norm of the Gram matrix. This simplifies previous proofs and provides the best estimates available.

\section{Preliminaries and notation}

Let  $\{\alpha_j\}$ be a sequence in $\mathbb{D}$ with corresponding Blaschke product $B$, and $B_j$ be the Blaschke product with zeroes at every point in the sequence except $\alpha_j$ and $\delta_j = |B_j(\alpha_j)|$. 
The separation constant $\delta $ is defined to be $\delta := \inf_j \delta_j$. Carleson's interpolation theorem
says that the sequence $\{\alpha_j\}$ is interpolating if and only if $\delta > 0$, \cite{car58}.
 The sequence $\{\alpha_j\}$ is said to be \textit{thin} if $\lim_{j\to\infty} \delta_j=1$. Given a thin sequence we may arrange the $\delta_j$ in increasing order and rearrange the zeros of the Blaschke product accordingly.

Recall that if $T$ is an operator on a Hilbert space $\mathcal{H}$ and $\lambda_n$ is the $n$th singular value of $T$, then given $p$ with $1 \le p < \infty$ the Schatten-$p$ class, $\mathcal{S}_p$, is defined to be the space of all compact operators with corresponding singular sequence in $\ell^p$, the space of $p$-summable sequences. Then $\mathcal{S}_p$ is a Banach space with norm $$\|T\|_p = \left(\sum |\lambda_n|^p\right)^{1/p}.$$ For $p = \infty$, we let $\mathcal{S}_\infty$ denote the space of compact operators. 



%
%
Recall that $k_j$ denotes the Szeg\H{o} kernel,
$g_j = k_j/\|k_j\|$, and $G$ the Gram matrix  with entries $G_{ij} = \langle g_j , g_i \rangle$.
(The Gram matrix depends of course on the sequence $\{ \alpha_j \}$, but we suppress this in the notation).
For $\{\alpha_j\}$ interpolating, we let $D$ be the diagonal matrix with entries $1/B_j(\alpha_j)$.  It is known (see, for example,  formula (26) of \cite{K}) that
\be
\label{eqa1}
G^{-1} \ = \
D^{*} G^t D .
\ee

For a given sequence $\{ \alpha_j \}$, the interpolation constant is the infimum of those $M$ 
such that for any sequence $\{ a_j \}$ in $\ell^\infty$, one can find a function $f$ in $H^\infty$
with $f(\alpha_j) = a_j$ and $\norm{f}_{\infty}\leq M \norm{a}_{\ell^\infty}$.
We shall let $M(\delta)$ denote the supremum of the interpolation constants over all
sequences $\{ \alpha_j \}$ with separation constant $\delta$.

The following  result is due essentially to A. Shields and H. Shapiro
\cite{shsh}. See
\cite[Proposition 9.5]{AM} for a proof of this version.
%
%
%

\begin{prop}\label{propb1}
Let $\{\alpha_j\}$ be an interpolating sequence in $\mathbb{D}$.

 (i) If the interpolation constant is $M$, then both $\|G \|$ and $\| G^{-1} \|$ are bounded by
 $M^2$.
 
(ii) If $\| G \| = C_1$ and $\| G^{-1} \| = C_2$, then the interpolation constant is
bounded by $\sqrt{C_1 C_2}$.
\end{prop}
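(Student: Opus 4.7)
For part (i), the plan is to work directly from the definition of the interpolation constant. Given any $a\in\ell^\infty$ with $\|a\|_\infty\le 1$, choose $f\in H^\infty$ with $f(\alpha_j)=a_j$ and $\|f\|_\infty\le M$. The reproducing-kernel eigenrelation $M_f^* k_j=\overline{f(\alpha_j)}k_j$ gives $M_f^* g_j=\overline{a_j} g_j$. Testing $\|M_f^*\xi\|^2\le M^2\|\xi\|^2$ against $\xi=\sum_j c_j g_j$ for finitely supported $c$ translates into the matrix inequality
\[
D_a G^{(N)} D_a^* \le M^2 G^{(N)}
\]
on every $N\times N$ truncation $G^{(N)}$, where $D_a$ is the diagonal matrix with entries $a_j$.

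The next step is to specialize to unimodular $a$ (so $D_a$ is unitary): applying the inequality with $\overline{a}$ in place of $a$ and conjugating by $D_a$ inverts it to $G^{(N)}\le M^2 D_a G^{(N)} D_a^*$. Together this yields the two-sided bound $M^{-2} G^{(N)}\le D_a G^{(N)} D_a^* \le M^2 G^{(N)}$ whenever $|a_j|=1$. Now I would randomize by taking $a_j=e^{i\theta_j}$ with $\theta_j$ iid uniform on $[0,2\pi]$, so that $E[a_k\overline{a_\ell}]=\delta_{k\ell}$ and hence $E[D_a G^{(N)} D_a^*] = \operatorname{diag}(G^{(N)}) = I$. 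Averaging both ends of the two-sided bound over the random phases yields $I\le M^2 G^{(N)}$ and $G^{(N)}\le M^2 I$. Letting $N\to\infty$, one has $\|G^{(N)}\|\nearrow\|G\|$ and $\|(G^{(N)})^{-1}\|\nearrow\|G^{-1}\|$ by density of finitely supported vectors in $\ell^2$, giving $\|G\|,\|G^{-1}\|\le M^2$.

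For part (ii), I would invoke the Nevanlinna--Pick theorem (equivalently, Sarason's theorem with commutant lifting), which says that $f(\alpha_j)=a_j$ admits an $H^\infty$-solution with $\|f\|_\infty\le M$ if and only if $M^2 G\ge D_a G D_a^*$. Under the hypotheses $\|G\|=C_1$ and $\|G^{-1}\|=C_2$, sub-multiplicativity gives $D_a G D_a^* \le \|a\|_\infty^2 C_1 I$, while $\|G^{-1}\|\le C_2$ is equivalent to $C_2 G\ge I$, hence $C_1 I\le C_1 C_2 G$. Chaining these yields $D_a G D_a^*\le C_1 C_2\|a\|_\infty^2\, G$, and Pick produces an interpolant with $\|f\|_\infty\le\sqrt{C_1 C_2}\|a\|_\infty$.

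The main technical obstacle I anticipate is the infinite-dimensional passage: for part (i), bridging the finite-truncation bounds to the full Gram matrix via monotone spectral convergence on finitely supported vectors, and for part (ii), applying Nevanlinna--Pick / commutant lifting to an infinite interpolating sequence by truncation followed by a weak-$*$ compactness argument in $H^\infty$. Modulo these classical ingredients, the proof reduces to linear algebra, sub-multiplicativity of the operator norm, and the randomization trick used in part (i).
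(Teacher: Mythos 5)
Your argument is correct. Note that the paper does not prove this proposition at all: it attributes the result to Shapiro and Shields and refers the reader to Proposition 9.5 of Agler--McCarthy, so there is no in-paper argument to compare against, and what you supply is a legitimate self-contained substitute. Part (i) is essentially the classical Shapiro--Shields mechanism: the eigenrelation $M_f^* g_j=\overline{f(\alpha_j)}\,g_j$ converts an interpolant of $a$ into the quadratic-form inequality $D_aGD_a^*\le M^2G$ on finitely supported vectors, and your two refinements --- inverting the inequality by conjugating with the unitary $D_a$ when $|a_j|\equiv 1$, and then averaging over independent random phases so that $D_aGD_a^*$ is replaced by $\operatorname{diag}(G)=I$ --- give $I\le M^2G$ and $G\le M^2I$ simultaneously in a clean way. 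Part (ii), via the sufficiency direction of the Nevanlinna--Pick theorem and the chain $D_aGD_a^*\le \|a\|_\infty^2C_1I\le\|a\|_\infty^2C_1C_2G$ (compressed to finite subsets, then a normal-families limit), is also sound. Two small points to make explicit in a write-up: the interpolation constant is an infimum, so the argument should be run with $M+\varepsilon$ and $\varepsilon\to0$; and in the limiting step of (i) what you actually use is that a positive quadratic form bounded above and below on finitely supported vectors extends to a bounded invertible operator with the same bounds (Cauchy--Schwarz for the upper bound, density for the lower), rather than a literal monotone convergence of $\|(G^{(N)})^{-1}\|$ --- though for positive matrices that monotonicity does hold. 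Neither point is a gap.
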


We shall use the following estimate of J.P. Earl (see \cite{E} or \cite{E2})
 to obtain our results. 
\begin{thm}[Earl's Theorem] \label{thm:E}


The interpolation constant $M(\delta)$ 
satisfies
$$M(\delta) \le \left(\frac{1 + \sqrt{1 - \delta^2}}{\delta}\right)^2.$$
\end{thm}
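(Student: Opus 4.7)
The plan is to follow the classical Earl-style fixed-point construction: for target data $\{a_j\}$ with $\|a\|_{\ell^\infty}\le 1$, I would seek an interpolant of the very restricted form $f = M\eta B_\beta$, where $\eta$ is a unimodular scalar, $M = \left(\frac{1+\sqrt{1-\delta^2}}{\delta}\right)^2$, and $B_\beta$ is a Blaschke product whose zeros $\{\beta_j\}$ are small pseudohyperbolic perturbations of the $\{\alpha_j\}$, with one $\beta_j$ near each $\alpha_j$. The condition $f(\alpha_j)=a_j$ then becomes $B_\beta(\alpha_j) = \bar\eta a_j/M$, i.e., prescribing values of modulus at most $1/M$ at each node. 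Parametrizing $w_j := \varphi_{\alpha_j}(\beta_j)\in\D$ so that $|w_j|=\rho(\alpha_j,\beta_j)$, I would define on the compact convex product set $X = \prod_j \overline{D(0,r)}$ (with the product topology) a continuous self-map $T\colon X\to X$ by the rule that $T(\{w_k\})_j$ is the unique point whose modulus and phase are chosen so that, after updating only the $j$-th factor of $B_\beta$ while leaving the other $w_k$ fixed, the Blaschke product takes the value $a_j/M$ at $\alpha_j$. In particular $|T(w)_j| = (|a_j|/M)\big/\prod_{k\ne j}\rho(\alpha_j,\beta_k)$, so a lower bound on this ``background product'' controls the self-map property.

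The heart of the argument is this lower bound. The pseudohyperbolic triangle inequality gives $\rho(\alpha_j,\beta_k) \ge (\rho_{jk}-r)/(1-r\rho_{jk})$ whenever $\rho_{jk}:=\rho(\alpha_j,\alpha_k)\ge r$; since $\rho_{jk}\ge\delta_j\ge\delta$, this will hold provided $r\le\delta$. Iterating the one-variable inequality
\[
\tfrac{x-r}{1-rx}\cdot\tfrac{y-r}{1-ry} \ \ge \ \tfrac{xy-r}{1-rxy}\qquad (x,y\in[r,1]),
\]
which reduces to the nonnegativity of the rational expression $r(1+r)(1-xy)(1-x)(1-y)/[(1-rx)(1-ry)(1-rxy)]$ after clearing denominators, yields $\prod_{k\ne j}\rho(\alpha_j,\beta_k)\ge(\delta-r)/(1-r\delta)$. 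The self-map requirement $|T(w)_j|\le r$ therefore reduces to the quadratic condition $Mr^2 - (M+1)\delta r + 1 \le 0$ in $r$. Its discriminant $\delta^2(M+1)^2-4M$ is nonnegative precisely when $\sqrt M\ge(1+\sqrt{1-\delta^2})/\delta$, and at equality the double root $r=(1-\sqrt{1-\delta^2})/\delta\in(0,\delta)$ makes the argument go through; this is exactly where the sharp Earl constant emerges.

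To extract the interpolant, I would invoke Schauder's fixed-point theorem on the continuous self-map $T$ of the compact convex $X$, producing a fixed point $\{w_j\}$ and hence a Blaschke product $B_\beta$ with $B_\beta(\alpha_j)=a_j/M$ for all $j$; the function $f = M\eta B_\beta$ then interpolates $\{a_j\}$ with $\|f\|_\infty = M$. For full rigor I would first truncate to $\{\alpha_1,\dots,\alpha_N\}$, apply Brouwer in $\C^N$, and pass to the limit by a normal-families argument, noting that the limit $\{\beta_j\}$ automatically satisfies the Blaschke condition because $1-|\beta_j|\asymp 1-|\alpha_j|$ whenever $\rho(\alpha_j,\beta_j)\le r<1$. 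The main obstacle I anticipate is the careful bookkeeping of the phase of $T(\{w_k\})_j$, which must be chosen continuously in $\{w_k\}$ and must absorb the unimodular normalization constants $\bar\beta_k/|\beta_k|$ present in each factor of $B_\beta$; the modulus estimate above is the clean geometric core of the argument, and once the continuity of the phase selection is settled the sharp constant drops out of the quadratic optimization with no further work.
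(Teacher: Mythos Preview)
The paper does not prove this statement at all: Earl's theorem is quoted from \cite{E}, \cite{E2} as a black box and used only as an input to Proposition~\ref{propb1} and Lemma~\ref{lm:useful2}. So there is nothing in the paper to compare your argument against.

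That said, your outline is a faithful and essentially correct reconstruction of Earl's original perturbation-of-zeros argument. The core steps are all right: the pseudohyperbolic triangle inequality gives $\rho(\alpha_j,\beta_k)\ge(\rho_{jk}-r)/(1-r\rho_{jk})$; your product inequality
\[
\frac{x-r}{1-rx}\cdot\frac{y-r}{1-ry}-\frac{xy-r}{1-rxy}
=\frac{r(1+r)(1-x)(1-y)(1-xy)}{(1-rx)(1-ry)(1-rxy)}\ge 0
\]
is correct and iterates to $\prod_{k\ne j}\rho(\alpha_j,\beta_k)\ge(\delta-r)/(1-r\delta)$; and the self-map condition does reduce to the quadratic $Mr^2-(M+1)\delta r+1\le 0$, whose discriminant vanishes exactly at $\sqrt{M}=(1+\sqrt{1-\delta^2})/\delta$ with double root $r=(1-\sqrt{1-\delta^2})/\delta\in(0,\delta)$. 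This is precisely where the sharp constant comes from.

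The two points you flag as obstacles are real but routine. For the phase, write each Blaschke factor in the unnormalized form $(\beta_k-z)/(1-\bar\beta_k z)$ so that no $\bar\beta_k/|\beta_k|$ appears; then the $j$-th factor evaluated at $\alpha_j$ is exactly $-w_j$ up to a unimodular constant depending continuously on $\alpha_j$, and the required phase of $T(w)_j$ is a continuous function of the remaining $w_k$ because $\rho(\alpha_j,\beta_k)\ge(\delta-r)/(1-r\delta)>0$ keeps every factor bounded away from zero. For the passage from finite to infinite sequences, your normal-families sketch is the standard one; the uniform bound $\rho(\alpha_j,\beta_j^{(N)})\le r$ ensures the limiting $\{\beta_j\}$ is Blaschke, and the locally uniform convergence of the finite products to $B_\beta$ preserves the interpolation conditions. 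The global unimodular constant $\eta$ is in fact unnecessary once the phases of the $w_j$ are chosen as above.
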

%
%

\section{Schatten-$p$ classes}

In this section we provide estimates on the Schatten-$p$ norm of $G - I$. We will need the theorem and lemma below.

\begin{thm}\label{zhu1}(see e.g. \cite[Theorem 1.33]{Z}) Let $T$ be an operator on a separable Hilbert space, $\mathcal{H}$.

If $0<p\leq2$ then
$$
\norm{T}_{\mathcal{S}_p}^p=\inf\left\{\sum_{n} \left\Vert Te_n \right\Vert^p:\{e_n\} \textnormal{ is any orthonormal basis in } \mathcal{H} \right\}
$$
and if $2\leq p<\infty$
$$
\norm{T}_{\mathcal{S}_p}^p=\sup\left\{\sum_{n} \left\Vert Te_n \right\Vert^p:\{e_n\} \textnormal{ is any orthonormal basis in } \mathcal{H} \right\}.
$$

\end{thm}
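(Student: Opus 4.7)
The plan is to derive both identities from a single observation: once one writes out $\norm{Te_n}^2$ in the singular value basis of $|T|$, it appears as a convex combination of the numbers $\lambda_k^2$ with doubly stochastic weights, and Jensen's inequality applied to $x\mapsto x^{p/2}$ yields the two inequalities in opposite directions depending on whether $p\geq 2$ or $p\leq 2$. Choosing $\{e_n\}$ to be the eigenbasis of $|T|$ itself then realises equality.

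In detail, assume first $T\in\mathcal{S}_p$, so $T$ is compact and $|T|=(T^{*}T)^{1/2}$ admits an orthonormal eigenbasis $\{f_k\}$ of $\mathcal{H}$ with $|T|f_k=\lambda_k f_k$, where $\{\lambda_k\}$ are the singular values of $T$ (with $\lambda_k=0$ on $\ker T$). For any $x\in\mathcal{H}$,
$$\norm{Tx}^2 = \langle |T|^2 x, x\rangle = \sum_k \lambda_k^2\,|\langle x, f_k\rangle|^2.$$
Given any orthonormal basis $\{e_n\}$ of $\mathcal{H}$, set $a_{nk}=|\langle e_n,f_k\rangle|^2$. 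Parseval's identity applied in each of the two bases gives $\sum_k a_{nk}=\norm{e_n}^2=1$ and $\sum_n a_{nk}=\norm{f_k}^2=1$, so the array $(a_{nk})$ is doubly stochastic; in particular,
$$\norm{Te_n}^2 = \sum_k a_{nk}\,\lambda_k^2$$
is a convex combination of $\{\lambda_k^2\}_k$ with weights $a_{nk}$.

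Now apply $\phi(x)=x^{p/2}$. If $p\geq 2$ then $\phi$ is convex, so Jensen gives $\norm{Te_n}^p\leq \sum_k a_{nk}\lambda_k^p$. Summing over $n$ and interchanging the order (justified by Tonelli, all terms being nonnegative),
$$\sum_n \norm{Te_n}^p \leq \sum_k \lambda_k^p\Bigl(\sum_n a_{nk}\Bigr) = \sum_k \lambda_k^p = \norm{T}_{\mathcal{S}_p}^p.$$
Taking $e_n=f_n$ makes $a_{nk}=\delta_{nk}$ and turns this inequality into equality, so the supremum equals $\norm{T}_{\mathcal{S}_p}^p$. If $0<p\leq 2$ then $\phi$ is concave, Jensen reverses, and the same computation yields $\sum_n \norm{Te_n}^p \geq \norm{T}_{\mathcal{S}_p}^p$, again with equality at $e_n=f_n$; this is the infimum formula.

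The main obstacle is bookkeeping around the case $T\notin\mathcal{S}_p$, where $\norm{T}_{\mathcal{S}_p}=\infty$ and one must verify that the right-hand side is also $\infty$. For $p\geq 2$ this is achieved by choosing $\{e_n\}$ to contain the eigenvectors of $|T|$ corresponding to the nonzero singular values (possible since $\mathcal{H}$ is separable), giving $\sum_n \norm{Te_n}^p \geq \sum_k \lambda_k^p = \infty$. For $0<p\leq 2$ the Jensen lower bound above applies to \emph{every} orthonormal basis and gives the same conclusion. Beyond these points the argument is entirely elementary: the only analytic input is Jensen's inequality on a countable probability measure, and the only identity needed is Parseval's, used twice.
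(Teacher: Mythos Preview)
The paper does not give a proof of this statement; it is quoted with a reference to Zhu's book and used as a black box. So there is nothing to compare against.

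Your argument is correct and is in fact the standard one. Writing $\norm{Te_n}^2$ in the eigenbasis of $|T|$, recognising the coefficient array $a_{nk}=|\langle e_n,f_k\rangle|^2$ as doubly stochastic via Parseval, and then applying Jensen to $x\mapsto x^{p/2}$ is exactly how this result is usually established (and is essentially what Zhu does). The choice $e_n=f_n$ showing the extremum is attained is also standard.

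One small imprecision: in your treatment of the case $T\notin\mathcal{S}_p$ you tacitly assume $T$ is still compact, since you invoke ``the eigenvectors of $|T|$ corresponding to the nonzero singular values'' (for $p\ge 2$) and the Jensen lower bound derived from the eigenbasis expansion (for $p\le 2$). If $T$ is bounded but not compact, $|T|$ need not have an eigenbasis. This is easily patched: for $p\ge 2$, non-compactness of $|T|$ gives some $\varepsilon>0$ for which the spectral projection $E_{[\varepsilon,\infty)}(|T|)$ has infinite rank, and any orthonormal basis extending an orthonormal sequence in its range gives an infinite sum; for $p\le 2$, compare with the Hilbert--Schmidt sum $\sum_n\norm{Te_n}^2$, which is infinite for every orthonormal basis once $T\notin\mathcal{S}_2$. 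In any case this corner is irrelevant to the paper's application, where the operator in question is $G-I$ for an interpolating sequence and is already known to be bounded, and compactness is part of what is being characterised.
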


We say that two sequences $\{x_n\}$ and $\{y_n\}$ of positive numbers are equivalent if there exist constants $c$ and $C$, independent of $n$, such that $c y_n \le x_n \le C y_n$ for all $n$. We write $x_n \asymp y_n$.  We will also write $A\lesssim B$ to indicate that there exists a constant $C$ such that $A\leq C B$.

\begin{lm} \label{4.1}
Let $\{e_n\}$ denote the standard orthonormal basis for $\ell^2$ and $\{\alpha_j\}$ be an interpolating sequence in $\mathbb{D}$ with corresponding $\delta_j$. Then
\[\|(G - I) e_n\| \asymp \sqrt{1 - \delta_n^2}.\]
\end{lm}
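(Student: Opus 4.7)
The key is to turn $\|(G-I)e_n\|^2$ into a sum of pseudohyperbolic quantities attached to $\alpha_n$, then compare that sum to the single product $\delta_n^2 = \prod_{i\ne n}\rho(\alpha_n,\alpha_i)^2$ using the interpolating hypothesis.

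\textbf{Step 1: identify the entries.} Since $\|g_n\|=1$, the $i$-th entry of $(G-I)e_n$ is $G_{in}=\langle g_n,g_i\rangle$ for $i\ne n$ and $0$ for $i=n$. A direct computation using $k_j(\alpha_i)=1/(1-\bar\alpha_j\alpha_i)$ and $\|k_j\|^2=1/(1-|\alpha_j|^2)$ gives
\[
|\langle g_n,g_i\rangle|^{2}\=1-\rho(\alpha_n,\alpha_i)^{2},
\qquad
\rho(a,b):=\Bigl|\tfrac{a-b}{1-\bar a b}\Bigr|,
\]
so that
\[
\|(G-I)e_n\|^{2}\=\sum_{i\ne n}\bigl(1-\rho(\alpha_n,\alpha_i)^{2}\bigr).
\]

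\textbf{Step 2: reduce to an elementary inequality.} Write $x_i:=\rho(\alpha_n,\alpha_i)^{2}\in(0,1]$; the target equivalence is
\[
\sum_{i\ne n}(1-x_i)\;\asymp\;1-\prod_{i\ne n}x_i \= 1-\delta_n^{2},
\]
with constants depending only on the separation constant $\delta$. The crucial observation is that, because $\prod_{i\ne n}x_i=\delta_n^{2}\geq\delta^{2}$ and every $x_j\leq 1$, each individual factor satisfies
\[
x_i\;\geq\;\prod_{j\ne n}x_j\;\geq\;\delta^{2}.
\]
So all the $x_i$ lie in the interval $[\delta^{2},1]$, on which $1-x$ and $-\log x$ are comparable.

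\textbf{Step 3: compare via logarithms.} On $[\delta^{2},1]$ we have $1-x_i\asymp -\log x_i$, and the sum $S:=\sum_{i\ne n}(-\log x_i)=-\log\delta_n^{2}$ satisfies $0\le S\le -\log\delta^{2}$. On this bounded interval, $1-e^{-S}\asymp S$, so
\[
\sum_{i\ne n}(1-x_i)\;\asymp\;S\;\asymp\;1-e^{-S}\=1-\delta_n^{2},
\]
with all implicit constants depending only on $\delta$. Taking square roots yields $\|(G-I)e_n\|\asymp\sqrt{1-\delta_n^{2}}$.

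\textbf{Main obstacle.} The only nontrivial point is the uniform lower bound $x_i\ge\delta^{2}$: without interpolation, a single tiny $\rho(\alpha_n,\alpha_i)$ would spoil the logarithmic comparison and decouple the sum from the product. Once that lower bound is in hand, the rest is the two elementary comparisons $1-x\asymp -\log x$ and $1-e^{-S}\asymp S$ on compact subintervals.
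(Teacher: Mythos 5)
Your proof is correct and follows essentially the same route as the paper's: express $\|(G-I)e_n\|^2$ as $\sum_{i\ne n}(1-\rho(\alpha_n,\alpha_i)^2)$ and pass between $1-x$ and $-\log x$ to convert the sum into the product $\delta_n^2$. Your Step 2, which justifies that each $\rho(\alpha_n,\alpha_i)^2\ge\delta_n^2\ge\delta^2$ (since the product of factors each at most $1$ bounds every individual factor from below), makes explicit a point the paper leaves implicit when it invokes the comparison ``for $x$ bounded away from $0$.''
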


\begin{proof}
We have
\begin{eqnarray*}
\|(G - I)e_n\|^2 & = & \langle (G^\ast - I)(G - I) e_n, e_n \rangle\\
& = & \Bigg{\langle}\begin{pmatrix}\langle g_n, g_1\rangle\\\ldots\\\langle g_n, g_{n - 1}\rangle\\0\\\langle g_n, g_{n + 1}\rangle\\\ldots\end{pmatrix},\begin{pmatrix}\langle g_n, g_1\rangle\\\ldots\\\langle g_n, g_{n - 1}\rangle\\0\\\langle g_n, g_{n + 1}\rangle\\\ldots\end{pmatrix}\Bigg{\rangle}\\
& = & \sum_{j \ne n} \langle g_n, g_j \rangle \overline{\langle g_n, g_j \rangle}\\
& = & \sum_{j \ne n} \left|\frac{\sqrt{1 - |\alpha_j|^2}\sqrt{1 - |\alpha_n|^2}}{1 - \overline{\alpha_n} \alpha_j}\right|^2\\
& = &  \sum_{j \ne n} 1 - \left|\frac{\alpha_j - \alpha_n}{1 - \overline{\alpha_n} \alpha_j}\right|^2.
\end{eqnarray*}
But  $-\log x \ge 1 - x$ for $x > 0$ and $-\log x < c (1 - x)$ for $x < 1$ bounded away from $0$ and some constant $c$ independent of $x$, so $-\log x \asymp 1 - x$ for $x$ bounded away from $0$. Consequently,
\begin{eqnarray*}
\|(G - I)e_n\|^2
& \asymp &   \sum_{j \ne n} -\log\left|\frac{\alpha_j - \alpha_n}{1 - \overline{\alpha_n} \alpha_j}\right|^2\\
& = &   -\log \prod_{j \ne n} \left|\frac{\alpha_j - \alpha_n}{1 - \overline{\alpha_n}\alpha_j}\right|^2\\
& = & -\log \delta_n^2\\
& \asymp &   1 - \delta_n^2.
\end{eqnarray*}

Note that the constants involved do not depend on $n$.
\end{proof}
 
Combining the lemma with Theorem~\ref{zhu1}, we obtain the following theorem.

\begin{thm}\label{thm:estimate}
The following estimates hold:
\begin{itemize}
\item If $2\leq p<\infty$ then 
$$
\sum_{n} (1-\delta_n)^{\frac{p}{2}}\lesssim \norm{G-I}_{\mathcal{S}_p}^p;
$$
\item If $0<p\leq 2$ then 
$$
\norm{G-I}_{\mathcal{S}_p}^p\lesssim \sum_{n} (1-\delta_n)^{\frac{p}{2}};
$$
\item If $p=2$ then 
$$
\sum_{n} (1-\delta_n)\asymp \norm{G-I}_{\mathcal{S}_2}^2. 
$$
\end{itemize}
\end{thm}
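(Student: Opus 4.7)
The plan is to combine Lemma \ref{4.1} with the variational characterizations of Schatten norms from Theorem \ref{zhu1}. First I would note that since $\delta_n \in [0,1]$, the quantities $1-\delta_n$ and $1-\delta_n^2 = (1-\delta_n)(1+\delta_n)$ are equivalent with constants independent of $n$ (the ratio lies in $[1,2]$). Combined with Lemma \ref{4.1}, this yields
$$\|(G-I)e_n\|^p \asymp (1-\delta_n)^{p/2}$$
uniformly in $n$, where $\{e_n\}$ denotes the standard orthonormal basis of $\ell^2$.

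For $2 \le p < \infty$, Theorem \ref{zhu1} expresses $\|G-I\|_{\mathcal{S}_p}^p$ as the supremum of $\sum_n \|(G-I)e_n\|^p$ over all orthonormal bases; evaluating this supremum at the standard basis produces a lower bound, which, after inserting the equivalence above, gives the first inequality. For $0 < p \le 2$, the same theorem expresses the quantity as an infimum, so evaluating at the standard basis gives an upper bound, yielding the second inequality. At $p = 2$, the Hilbert–Schmidt identity $\|G-I\|_{\mathcal{S}_2}^2 = \sum_n \|(G-I)e_n\|^2$ holds for any orthonormal basis, and Lemma \ref{4.1} then delivers the two-sided equivalence directly.

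The real work has already been carried out in Lemma \ref{4.1}; the theorem itself is essentially bookkeeping once that lemma is in hand. The only delicate point is ensuring that the implicit constants in Lemma \ref{4.1} are uniform in $n$, so that the termwise equivalence can be summed without degradation. This uniformity is explicitly flagged at the end of the proof of Lemma \ref{4.1}, so no further argument is required.
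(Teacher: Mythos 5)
Your proposal is correct and is precisely the argument the paper intends: the authors state the theorem immediately after Lemma~\ref{4.1} with the single line ``Combining the lemma with Theorem~\ref{zhu1}, we obtain the following theorem,'' and your write-up simply fills in that combination (including the harmless passage between $1-\delta_n$ and $1-\delta_n^2$ via the factor $1+\delta_n\in[1,2]$, and the uniformity in $n$ of the constants from Lemma~\ref{4.1}). Nothing further is needed.
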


\begin{lm}\label{lm:useful2}
Let $\{\alpha_j\}$ be an interpolating sequence and $G$ the corresponding Gram matrix.
Let $C = \| G^{-1}\|$.
 Then  $\|G - I\| \le C-1$.
\end{lm}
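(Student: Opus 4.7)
The plan is to use the self-adjointness and positivity of $G$ together with equation (\ref{eqa1}) to pin down the location of the spectrum of $G$ inside $[1/C, C]$. Once we know $\sigma(G) \subset [1/C, C]$, the bound on $\|G-I\|$ follows by spectral calculus.

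First I would collect the easy observations. Since $G$ is a Gram matrix of the normalized reproducing kernels $\{g_n\}$, it is positive and self-adjoint. The diagonal entries are $G_{jj} = \langle g_j, g_j \rangle = 1$, and for a positive self-adjoint matrix the operator norm is at least as large as any diagonal entry, giving $\|G\| \ge 1$. Applying the same observation to $G^{-1}$, whose diagonal entries are $(G^{-1})_{jj} = |D_{jj}|^2 G_{jj} = 1/\delta_j^2 \ge 1$ by (\ref{eqa1}), yields $C = \|G^{-1}\| \ge 1$. Thus the spectrum of $G$ lies in the interval $[1/C,\|G\|]$ with $1/C \le 1 \le \|G\|$.

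The key (and only non-routine) step is to show $\|G\| \le C$. This is where the general positive-operator intuition fails and the specific identity (\ref{eqa1}) must be used. Inverting (\ref{eqa1}) gives
\[
G^t \= (D^*)^{-1} G^{-1} D^{-1},
\]
and since $D$ is diagonal with entries $1/B_j(\alpha_j)$, its inverse $D^{-1}$ is diagonal with entries $B_j(\alpha_j)$, so
\[
\|D^{-1}\| \= \sup_j \delta_j \;\le\; 1.
\]
Taking norms and using $\|G\| = \|G^t\|$ and $\|(D^*)^{-1}\| = \|D^{-1}\|$, we obtain
\[
\|G\| \= \|G^t\| \;\le\; \|D^{-1}\|^2\,\|G^{-1}\| \;\le\; C.
\]

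To finish, note that $G - I$ is self-adjoint with spectrum in $[1/C - 1,\ \|G\| - 1]$, so
\[
\|G - I\| \= \max\bigl(\|G\| - 1,\ 1 - 1/C\bigr).
\]
The previous paragraph gives $\|G\| - 1 \le C - 1$, and $1 - 1/C \le C - 1$ is equivalent to $(C-1)^2 \ge 0$, which holds since $C \ge 1$. Combining, $\|G - I\| \le C - 1$, as required. The single place where the Blaschke/kernel structure is essential is the bound $\|G\| \le C$; without it the conclusion would be false for a generic positive operator.
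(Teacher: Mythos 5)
Your proof is correct and follows essentially the same route as the paper: the paper's argument also deduces $G \le CI$ from \eqref{eqa1} (exactly via the bound $\|D^{-1}\| = \sup_j \delta_j \le 1$ that you spell out) and $G \ge (1/C)I$ from positivity, then reads off $\|G-I\| \le C-1$ from the resulting spectral localization. Your write-up simply makes explicit the steps the paper leaves terse.
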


\begin{proof}
By (\ref{eqa1}), we have $G \leq C I$, and as $G$ is a positive operator,
we have $G \geq (1/C) I$. Therefore
\[
\left(\frac{1}{C} - 1 \right)I \ \leq \ G-I \ \leq \ (C -1) I ,
\]
and as $C > 1$, we get  $\|G - I\| \le C-1$.
\end{proof}

In what follows, for a positive integer $N$, we let $G_N$ denote the lower right-hand corner of the Gram matrix obtained by deleting the first $N$ rows and columns of $G$. Thus,\\

\[G_N = \left( \begin{array}{cccccc}
1 & \langle g_{N+1}, g_N\rangle & \cdots &\langle g_{N+j}, g_N\rangle & \cdots \\
\langle g_{N}, g_{N + 1}\rangle & 1 & \cdots & \langle g_{N + j}, g_{N + 1}\rangle & \cdots\\
\cdots & \cdots & \cdots & \cdots & \cdots\\
\end{array} \right)\]\\
and $\lambda_n \ge 0$ denotes the $n$-th singular value of $G-I$, where the singular values are arranged in decreasing order.

We are now ready to provide our simpler proof of Volberg's result \cite[Theorem 2, p. 215]{V}. 

\begin{thm}
\label{prop:GMPW}
The sequence $\{\alpha_n \}$ is a thin 
 sequence if and only if
the Gram matrix $G$ is the identity plus a compact operator.
\end{thm}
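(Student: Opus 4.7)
The plan is to handle the two implications separately. For the easy direction, suppose $G = I + K$ with $K$ compact. The standard basis $\{e_n\}$ converges weakly to $0$ in $\ell^2$, so compactness forces $\|(G-I)e_n\| = \|K e_n\| \to 0$, and Lemma~\ref{4.1} then yields $\delta_n \to 1$, i.e.\ $\{\alpha_n\}$ is thin.

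For the converse, I will apply Earl's theorem to the tail Gram matrices. The key observation is that removing points from a Blaschke product can only increase the values $|B_j(\alpha_j)|$ at the remaining zeros, since one is multiplying by fewer Blaschke factors, each of modulus at most one. Consequently, if $\{\alpha_j\}$ is thin, the separation constant of the tail sequence $\{\alpha_j\}_{j \ge N+1}$ is bounded below by $\inf_{j \ge N+1}\delta_j$, which tends to $1$ as $N \to \i$. Earl's theorem (Theorem~\ref{thm:E}) then gives that the interpolation constants of the tail sequences tend to $1$, and Proposition~\ref{propb1}(i) implies $\|G_N^{-1}\| \to 1$. Applying Lemma~\ref{lm:useful2} to the tail sequence, I obtain $\|G_N - I\| \le \|G_N^{-1}\| - 1 \to 0$ as $N \to \i$.

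To upgrade this tail control to compactness of $G-I$, I decompose using the coordinate projection $P_N$ onto $\mathrm{span}\{e_1,\dots,e_N\}$ and $Q_N = I - P_N$:
\[
G - I \= P_N(G-I)P_N + P_N(G-I)Q_N + Q_N(G-I)P_N + Q_N(G-I)Q_N.
\]
Each of the first three summands has rank at most $N$ and is therefore compact; the last summand is, under the obvious unitary identification of $\mathrm{ran}\, Q_N$ with $\ell^2$, precisely $G_N - I$, whose norm tends to $0$ by the preceding paragraph. Hence $G - I$ is a norm limit of finite-rank operators and lies in $\mathcal{S}_\i$. I expect the main subtlety to be the verification that thinness passes to the tail sequences with genuinely improving separation constants; once that is secure, Earl's theorem combined with Proposition~\ref{propb1} and Lemma~\ref{lm:useful2} does all the analytic work, and the block decomposition handles the reduction from tail norms to global compactness.
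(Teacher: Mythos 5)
Your proof is correct. In the hard direction (thin $\Rightarrow$ $G-I$ compact) you follow essentially the paper's route: the tail separation constants satisfy $\delta(N)\ge\inf_{j\ge N}\delta_j\to 1$ because deleting Blaschke factors of modulus less than one only increases $|B_j(\alpha_j)|$; Earl's Theorem~\ref{thm:E} together with Proposition~\ref{propb1} bounds $\|G_N^{-1}\|$; Lemma~\ref{lm:useful2} gives $\|G_N-I\|\to 0$; and your block decomposition with $P_N$ and $Q_N$ merely makes explicit the finite-rank approximation that the paper compresses into the phrase ``we conclude that $G-I$ is compact.'' The easy direction is where you genuinely diverge. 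The paper rewrites the Koosis identity \eqref{eqa1} as $G^{-1}-I=D^*(G^t-I)D+[D^*D-I]$ and reads off that $D^*D-I$ is compact, hence $\delta_j^2\to 1$; you instead use that $e_n$ tends weakly to $0$, so compactness of $K=G-I$ forces $\|Ke_n\|\to 0$, and Lemma~\ref{4.1} converts this into $1-\delta_n^2\to 0$. Your version is more self-contained, reusing a lemma the paper has already proved rather than invoking \eqref{eqa1} and the invertibility of $G$. One small caveat: Lemma~\ref{4.1} is stated for interpolating sequences, and the two-sided equivalence $\|(G-I)e_n\|^2\asymp 1-\delta_n^2$ really does use the separation hypothesis. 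For your implication, however, you only need the one-sided bound $1-\delta_n^2\le\|(G-I)e_n\|^2$, which follows from $1-\prod_j x_j\le\sum_j(1-x_j)$ for $x_j\in[0,1]$ applied to the pseudohyperbolic distances, with no separation assumption at all; quoting that inequality instead of the full lemma makes your easy direction hypothesis-free, whereas the paper's argument still presupposes that $G^{-1}$ exists.
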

\begin{proof} 
($\Rightarrow$)
Suppose $\{\alpha_n \}$ is thin. By discarding finitely many points in the sequence $( \alpha_n )$, we can assume that the
sequence has a positive separation constant, and hence is interpolating.

Let $G_N$ be the Gram matrix of $\{ g_j \}$ for $j \geq N$. 
We shall let  $\delta_{N, j}$ denote the $\delta_j$ defined for $G_N$ (that is, corresponding to the Blaschke sequence $\{\alpha_j\}_{j \ge N}$). 
Note that  $\delta_{N, j} \ge \delta_{N + j}$ for $j = 0, 1, 2, \ldots$, and so we have that $\delta(N) :=\inf_j \delta_{N, j} \ge \inf_j \delta_{N + j} = \delta_N$. By 
Theorem~\ref{thm:E} and
Proposition~\ref{propb1}, 
$$
\|G_N^{-1}\| \  \le \ (M(\delta(N))^2 \ \le  \ \frac{\left(1 + \sqrt{1 - \delta(N)}\right)^4}{\delta(N)^4} \ \le \frac{\left(1 + \sqrt{1 - \delta_N^2}\right)^4}{\delta_N^4}. 
$$
Applying Lemma~\ref{lm:useful2}
$$\|G_N - I_N\| \le \left(\frac{\left(1 + \sqrt{1 - \delta_N^2}\right)^4}{\delta_N^4} - 1\right) \leq C \sqrt{1 - \delta_N},$$ where $C$ is a constant independent of $N$. Since $\sqrt{1 - \delta_N} \to 0$ as $N \to \infty$, we conclude that $G-I$ is compact.

($\Leftarrow$) From \eqref{eqa1}, we have
\be
\label{eqc1}
G^{-1} - I = D^* (G^t - I) D + [ D^* D - I ] .
\ee
If $G-I$ is compact, then so are $G^t - I$ and $G^{-1} - I = G^{-1}(I - G)$.
Therefore from \eqref{eqc1}, we have
$D^*D - I$ is compact, which means $\lim_{j \to \infty}\delta_j^2 = 1$. Consequently, the sequence is thin.
\end{proof}

\begin{thm}
\label{thm:pge2}
For $2 \le p < \infty$, the operator $G - I \in \mathcal{S}_p$ if and only if $\sum_n (1 - \delta_n^2)^{p/2} < \infty$. \end{thm}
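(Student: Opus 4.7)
The plan is to combine the lower bound from Theorem~\ref{thm:estimate} with a rank-truncation argument built on the operator-norm estimate from the proof of Theorem~\ref{prop:GMPW}.

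For the necessity direction, $G-I \in \mathcal{S}_p$ is in particular compact, so by Theorem~\ref{prop:GMPW} the sequence is thin and (after discarding finitely many initial points) interpolating. The first bullet of Theorem~\ref{thm:estimate} then gives $\sum_n(1-\delta_n)^{p/2} \lesssim \|G-I\|_{\mathcal{S}_p}^p < \infty$, and the elementary two-sided bound $(1-\delta_n) \le (1-\delta_n^2) \le 2(1-\delta_n)$, valid for $\delta_n \in [0,1]$, yields $\sum_n(1-\delta_n^2)^{p/2} < \infty$.

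For the sufficiency direction, the hypothesis forces $\delta_n \to 1$, so $\{\alpha_n\}$ is thin; after discarding finitely many initial terms we may assume it is interpolating with $\delta_n \ge \delta_0 > 0$ for all $n$. Repeating the argument in the proof of Theorem~\ref{prop:GMPW} (applying Theorem~\ref{thm:E} and Lemma~\ref{lm:useful2} to each tail Gram matrix) produces a constant $C$, depending only on $\delta_0$, with
$$\|G_N - I_N\| \le C\sqrt{1-\delta_N} \qquad \text{for all } N\ge 1.$$
To transfer this into singular-value decay for $G-I$ itself, let $P_{N-1}$ denote the orthogonal projection onto $\mathrm{span}\{e_1,\dots,e_{N-1}\}$ and $Q_{N-1} = I - P_{N-1}$, and set
$$\widetilde K_N := P_{N-1}(G-I) + Q_{N-1}(G-I)P_{N-1}.$$
Since $P_{N-1}(G-I)$ has range in $P_{N-1}\ell^2$ and $Q_{N-1}(G-I)P_{N-1}$ has effective domain $P_{N-1}\ell^2$, one has $\rk(\widetilde K_N) \le 2(N-1)$, while a direct computation gives $(G-I) - \widetilde K_N = Q_{N-1}(G-I)Q_{N-1}$, whose operator norm equals $\|G_N - I_N\|$. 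The variational principle $\lambda_k(T) \le \|T-F\|$ for any $F$ with $\rk(F) < k$ then yields
$$\lambda_{2N-1}(G-I) \le \|G_N - I_N\| \le C\sqrt{1-\delta_N}.$$
Using $\lambda_{2N} \le \lambda_{2N-1}$ and summing in pairs,
$$\|G-I\|_{\mathcal{S}_p}^p = \sum_n \lambda_n(G-I)^p \le 2C^p \sum_{N \ge 1}(1-\delta_N)^{p/2} \asymp \sum_N(1-\delta_N^2)^{p/2} < \infty.$$

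The only real subtlety is spotting the rank-$2(N-1)$ truncation $\widetilde K_N$, which absorbs the off-diagonal coupling between the first $N-1$ indices and the tail, together with the finite-dimensional corner, into a single finite-rank operator; once that observation is in place, the argument reduces to routine singular-value bookkeeping. I do not anticipate any further technical obstacles, since interpolation between $\mathcal{S}_2$ and $\mathcal{S}_\infty$ is deliberately \emph{avoided} (the $\mathcal{S}_2$ norm of $G-I$ can be infinite under our hypothesis when $p > 2$).
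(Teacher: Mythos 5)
Your proposal is correct and follows essentially the same route as the paper: the necessity direction quotes Theorem~\ref{thm:estimate}, and the sufficiency direction combines the tail estimate $\|G_N - I_N\| \le C\sqrt{1-\delta_N}$ from the proof of Theorem~\ref{prop:GMPW} with the variational characterization of singular values applied to a rank-$2(N-1)$ truncation. Your operator $\widetilde K_N$ is exactly the paper's ``matrix with the same first $N$ rows and columns as $G-I$,'' written out explicitly; you also correctly handle the harmless discrepancy between $1-\delta_n$ and $1-\delta_n^2$, which the paper leaves implicit.
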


\begin{proof}
By Theorem~\ref{thm:estimate}, if $G - I \in \mathcal{S}_p$, then the sum is finite.

Now suppose the sum is finite. Using Lemma~\ref{lm:useful2} as in Theorem~\ref{prop:GMPW}, we have

$$\|G_N - I_N\| \le C \sqrt{1 - \delta_N},$$ where $C$ is independent of $N$.

By \cite[Theorem 1.4.11]{Z},
$$|\lambda_{N + 1}| \le \inf\{\|(G - I) - F\|: F \in \mathcal{F}_N\},$$ where $\mathcal{F}_N$ is the set of all operators of rank less than or equal to $N$. Therefore, taking $F$ to be the matrix with the same first $N$ rows and columns as $G - I$, which is of rank at most $2N$, 
we have $$|\lambda_{2N+1}| \le \|G_{N+1} - I_{N+1}\| \le C \sqrt{1 - \delta_{N+1}},$$ by our computation above. Therefore $$|\lambda_{2N+1}|^p \le C^p (1 - \delta_{N+1})^{p/2}.$$ Since the singular values are arranged in decreasing order, $|\lambda_{2n + 1}| > |\lambda_{2n}|$ for each $n$. Thus, if $\sum_N (1 - \delta_N)^{p/2} < \infty$, then $\sum_n |\lambda_{2n}|^p \le  2 \sum_n |\lambda_{2n+1}|^p < \infty$   and  we conclude that $G - I \in \mathcal{S}_p$.

\end{proof}

We conclude by remarking that it is possible to trace through the proofs above to determine constants $c$ and $C$ ,
which depend only on $\delta = \inf_n {\delta_n}$,
such that for $ 2 \leq p \leq \infty$,

\be
\label{eq:const}
c \| \sqrt{1-\delta_n} \|_{\ell^p} \ \le \
\norm{G-I}_{\mathcal{S}_p} \ \le  \ 
C \| \sqrt{1-\delta_n} \|_{\ell^p} .
\ee
In particular, by choosing $\delta$ close enough to $1$, one can choose 
$c$ and $C$ in \eqref{eq:const} arbitrarily close to
$\sqrt{2}$ and $4 \sqrt{2} (2^{1/p})$, respectively.

\begin{question}
Is Theorem~\ref{thm:pge2} true for $p < 2$?
\end{question}

{\bf Acknowledgement.} We thank the referee for his or her careful reading of this manuscript as well as helpful comments.

\begin{bibdiv}

\begin{biblist}

\bib{AM}{book}{
  author =      {J. Agler and J.E. McCarthy}
  title =       {{P}ick Interpolation and {H}ilbert Function Spaces}
  publisher =   {American Mathematical Society}
  year =        {2002}
  address =     {Providence}
}

\bib{ACS}{article}{
    AUTHOR = {Axler, Sheldon},
    AUTHOR = {Chang, Sun-Yung A.}
    AUTHOR =  {Sarason, Donald},
     TITLE = {Products of {T}oeplitz operators},
   JOURNAL = {Integral Equations Operator Theory},
  FJOURNAL = {Integral Equations and Operator Theory},
    VOLUME = {1},
      YEAR = {1978},
    NUMBER = {3},
     PAGES = {285--309},
      ISSN = {0378-620X},
   MRCLASS = {47B35 (46J15)},
  MRNUMBER = {511973 (80d:47039)},
MRREVIEWER = {R. G. Douglas},
       URL = {http://dx.doi.org/10.1007/BF01682841},
}

%
\bib{car58}{article}{
  author =      {L. Carleson},
  title =       {An interpolation problem for bounded analytic functions},
  journal =     {American J. Math.},
  year =        {1958},
  volume =      {80},
  pages =       {921--930}
}

\bib{CFT}{article}
{AUTHOR = {Chalendar, I.}
AUTHOR = {Fricain, E.} 
AUTHOR = {Timotin, D.},
     TITLE = {Functional models and asymptotically orthonormal sequences},
   JOURNAL = {Ann. Inst. Fourier (Grenoble)},
  FJOURNAL = {Universit\'e de Grenoble. Annales de l'Institut Fourier},
    VOLUME = {53},
      YEAR = {2003},
    NUMBER = {5},
     PAGES = {1527--1549},
      ISSN = {0373-0956},
     CODEN = {AIFUA7},
   MRCLASS = {47A45 (30D55 46B15 46E22)},
  MRNUMBER = {2032942 (2004k:47016)},
MRREVIEWER = {V. V. Peller},
       URL = {http://aif.cedram.org/item?id=AIF_2003__53_5_1527_0},
}


\bib{E}{article}{
Author = {Earl, J. P},
Title = {On the interpolation of bounded sequences by bounded functions},
Journal = {J. London Math. Soc.},
Volume = {2}
Year = {1970},
Pages = {544--548}
}

\bib{E2}{article}{
 AUTHOR = {Earl, J. P.},
     TITLE = {A note on bounded interpolation in the unit disc},
   JOURNAL = {J. London Math. Soc. (2)},
  FJOURNAL = {Journal of the London Mathematical Society. Second Series},
    VOLUME = {13},
      YEAR = {1976},
    NUMBER = {3},
     PAGES = {419--423},

}

\bib{F}{article}{
AUTHOR = {Fricain, Emmanuel},
     TITLE = {Bases of reproducing kernels in model spaces},
   JOURNAL = {J. Operator Theory},
  FJOURNAL = {Journal of Operator Theory},
    VOLUME = {46},
      YEAR = {2001},
    NUMBER = {3, suppl.},
     PAGES = {517--543},
 
}

\bib{K}{article}{
AUTHOR = {Koosis, Paul},
     TITLE = {Carleson's interpolation theorem deduced from a result of
              Pick},
 BOOKTITLE = {Complex analysis, operators, and related topics,  Oper. Theory Adv. Appl., 113, Birkh\"auser, Basel},
     PAGES = {151--162},
      YEAR = {2000},
}

\bib{shsh}{article}{
  author =      {H.S. Shapiro and A.L. Shields}
  title =       {On some interpolation problems for analytic functions},
  journal =      {American J. Math.},
  year =        {1961},
  volume =      {83},
  pages =       {513--532}
}

\bib{V}{article}{
    AUTHOR = {Vol{\cprime}berg, A. L.},
     TITLE = {Two remarks concerning the theorem of {S}. {A}xler, {S}.-{Y}.
              {A}. {C}hang and {D}. {S}arason},
   JOURNAL = {J. Operator Theory},
  FJOURNAL = {Journal of Operator Theory},
    VOLUME = {7},
      YEAR = {1982},
    NUMBER = {2},
     PAGES = {209--218},
      ISSN = {0379-4024},
   MRCLASS = {47B38 (47B05)},
  MRNUMBER = {658609 (84h:47038a)},
MRREVIEWER = {Takahiko Nakazi},
}

\bib{Z}{book} {
    AUTHOR = {Zhu, Kehe},
     TITLE = {Operator theory in function spaces},
    SERIES = {Mathematical Surveys and Monographs},
    VOLUME = {138},
   EDITION = {Second},
 PUBLISHER = {American Mathematical Society},
   ADDRESS = {Providence, RI},
      YEAR = {2007},
     PAGES = {xvi+348},
      ISBN = {978-0-8218-3965-2},
   MRCLASS = {47B35 (46Exx 47-02 47B10 47B33 47B38)},
  MRNUMBER = {2311536 (2008i:47064)},
MRREVIEWER = {Miroslav Engli{\v{s}}},
}

\end{biblist}

\end{bibdiv}

\end{document}